
\NeedsTeXFormat{LaTeX2e}

\documentclass{amsart}

\usepackage{amsmath,amsfonts}
\usepackage{amssymb,latexsym}
\usepackage[colorlinks]{hyperref}

\newtheorem{theorem}{Theorem}[section] 
\newtheorem{lemma}[theorem]{Lemma}     

\newtheorem{proposition}[theorem]{Proposition}
\theoremstyle{definition}

\newtheorem{definition}[theorem]{Definition}



\newcommand{\C}{$C^*$}



\title[on locally finite dimensional traces]
{ON LOCALLY FINITE DIMENSIONAL TRACES} 

\author{Massoud Amini, Mehdi Moradi}
\subjclass{Primary 46L35 ; Secondary 19K14}



\begin{document}
	\maketitle

	\begin{abstract}
		We partially resolve two open questions on approximation properties of traces on simple \C-algebras. We  answer a question raised by Nate Brown by showing that locally finite dimensional (LFD) traces form a convex set for simple \C-algebras. We prove that all the traces on the reduced \C-algebra $C^*_r(\Gamma)$ of a discrete amenable ICC group $\Gamma$ are LFD, and conclude that $C^*_r(\Gamma)$ is strong-NF in the sense of Blackadar-Kirchberg in this case. This partially answers another open question raised by Brown.
	\end{abstract}

\section{Introduction}
In his monumental paper \cite{Con76} Alain Connes introduced the notion of amenable trace (cf., Definition \ref{amt} below) under the title of ``hyper-trace" and proved that a type II$_1$-factor is hyperfinite if and only if its unique tracial state is amenable. Exploiting this fact, Connes proves in \cite{Con89} the non-existence of finite summable Fredholm modules on reduced group \C-algebra of a free group. In the same vein, Eberhard Kirchberg defined liftable traces on \C-algebras and proved that  a discrete group with Kazhdan property $(T)$ is residually finite if and only if the canonical trace of full group \C-algebra is liftable \cite{K}. Also, he proves that a general discrete group $\Gamma$ has factorization property if and only if its canonical trace of full group algebra, $C^*(\Gamma)$, is liftable. 

Eventually, it was Nate Brown who recognized the central role of amenable tracial states and elaborated on how they could be employed in studying finite dimensional approximation properties of \C-algebras, and used them to give  characterizations of central notions such as liftability \cite{Bro03}. Based on the pioneering work of Sorin Popa \cite{Pop}, Brown also introduced and studied the notions of quasidiagonal and locally finite dimensional traces and their uniform counterparts, but unlike his thorough analysis of amenable traces, he gave no characterization of quasidiagonality or local finite dimensionality.

In their celebrated paper \cite{BK01}, Blackadar and Kirchberg examined the class of \C-algebras that can be written as a generalized inductive limit of finite dimensional \C-algebras. In particular, they introduced the class of strong-NF algebras, where the connecting maps are complete order embeddings, and characterized them as exactly the class of nuclear and inner-QD \C-algebras, for example every RFD \C-algebra is inner-QD hence any nuclear and RFD algebra is strong-NF. They further showed that for separable \C-algebras, inner quasidiagonality is equivalent to the existence of a separating sequence of irreducible  quasidiagonal representations \cite{BK01}. For this reason, inner quasidiagonality is not homotopy invariant and does not pass to subalgebras or quotients in general \footnote{For the lack of homotopy invariance and passing to subalgebras, consider the cone $C_0((0,1],\mathcal O_n)$  of the Cuntz algebra, and note that its irreducible representations factor through point evaluations on $(0,1]$ and so cannot be QD, since $\mathcal O_n$ is not QD, also the cone embeds into an AF algebra which is inner-QD (actually strong QD) \cite{Oza03}. For the lack of passing to quotients, consider $C^*(\mathbb{F}_2)$, the maximal group \C-algebra of free group on two generators, which is RFD  \cite{Cho80} and $\mathcal{O}_2$ arises as its quotient.}.

One of the central problems in approximation theory of traces on \C-algebras is the question that in what circumstances all amenable traces are quasidiagonal \cite[7.3.2]{Win16}, \cite{BCW}. The first major breakthrough was a result of Tikuisis-White-Winter, who showed that faithful traces on separable nuclear $C^*$-algebras in the UCT class are quasidiagonal \cite{TWW}. In particular, as examples in \cite{DCE} and \cite{Sch20} suggest, it is plausible that for a discrete group $G$, the full $C^*$-algebra $C^*(G)$ is inner quasidiagonal provided that $G$ is amenable. A partial answer to this question is provided by {\it Proposition D} below.

The main objective of the current paper  is to  give affirmative  answers to some natural  questions along the above lines.  First, we obtain a  new characterization of locally finite dimensional traces, as follows:

\vspace{.3cm}
{\bf Theorem A.}
	A tracial state $\tau$ on a simple \C-algebra $A$ is locally finite dimensional iff for every $\epsilon>0$, every positive integer $n\geq 1$, and every $a_1,..., a_n\in A$, there are disjoint irreducible representations $\pi_j:A\to B(H_j)$ and finite rank projections $p_j\in B(H_j)$, for $1\leq j\leq n$,  such that 
	$$\Big|\tau(a_i)-\frac{\sum_{j=1}^{n}{Tr(p_j\pi_j(a_i))}}{\sum_{j=1}^{n}{Tr(p_j)}}\Big|<\epsilon,$$ for each $1\leq i\leq n$,	and
	$$\max_{1\leq j\leq n}{\|[p_j,\pi_j(a_i)]\|}<\epsilon,$$
where $Tr$ is the canonical trace on the matrix algebra $p_jB(H_j)p_j$ i.e. the sum of eigenvalues counted with multiplicities.

\vspace{.3cm}
This is then used to prove that for simple \C-algebras, these tracial states form a convex set, partially answering an open question raised by Brown \cite[page 29]{Bro03}. 

\vspace{.3cm}
{\bf Corollary B.} The set of all locally finite dimensional traces on a simple separable \C-algebra form a convex set.

\vspace{.3cm}
To highlight the importance of LFD traces, we observe that if for each separable \C-algebra $A$, uniformly amenable QD traces are LFD,  it follows that  hyperfinite type II$_1$-factor $\mathcal{R}$ is QD, something which seems to be wide-open at present. 

\vspace{.3cm}
{\bf Proposition C.} The following statements are equivalent:

$(i)$	For every separable \C-algebra $C$, $UAT(C)\cap AT_{QD}(C)\subset AT_{LFD}(C)$,

$(ii)$  The hyperfinite type $II_1$-factor $\mathcal{R}$ is quasidiagonal, and for every unital inclusion of separable \C-algebras $B\subset A$ if $\tau\in UAT(A)\cap AT_{LFD}(A)$ then $\tau|_B\in AT_{LFD}(B)$.

\vspace{.3cm}
 Our last result gives a characterization of inner quasidiagonality of the reduced group \C-algebra of a discrete group, giving an inner-QD analog of the celebrated Rosenberg conjecture, already settled  by Tikuisis-White-Winter in \cite{TWW}. Indeed, we prove a little more.
 
\vspace{.3cm}
{\bf Proposition D.}
 Let $\Gamma$ be a discrete amenable ICC group, then all traces on the reduced \C-algebra $C^*_r(\Gamma)$ are automatically locally finite dimensional, and in particular,  $C^*_r(\Gamma)$ is strong-NF.

\section{Preliminaries and conventions}
In this section we recall a number of preliminary facts on approximation properties of traces and inner quasidiagonality, needed in the rest of this paper.
In this paper, traces are always meant to be tracial states and $T(A)$ denotes the collection of all traces. We write $\mathbb{M}_n$ for the \C-algebra of all $n\times n$ complex matrices with canonical non normalized tracial functional $Tr$. We denote the normalized trace by $tr$ or more specifically by $tr_n$. For $x\in \mathbb M_n$, we write $\|x\|_2:=tr_n(x^*x)^{\frac{1}{2}}$. Also, for a \C-algebra $A$, we denote the positive cone of $A$ by $A_+$. 

When two representations $\pi:A\to B(H)$ and $\sigma:A\to B(K)$ are (unitarily) equivalent  via a unitary $v:H\to K$ we write $\pi\sim_v\sigma$. For a representation $\pi: A\to B(H)$ and projection $p\in B(H)$, $A_p$ is the set of all $a\in A$ with $p\pi(a)=\pi(a)p$. 

For a non-degenerate representation $\rho$, we denote the central cover of $\rho$ by $c(\rho)$. This is defined to be the central projection $c(\rho):=1_{A^{**}}-e_\rho$, where $e_\rho$ is the unit of the $W^*$-algebra $ker(\rho)^{**}$. It is known that $c(\rho)A^{**}$ is isomorphic to $\rho(A)''$ and  that $c(\rho)$ determines $\rho$ up to quasi-equivalence \cite[Page 6]{BO}. We say that an irreducible representation $\rho$ is GCR if its range contains  $\mathbb{K}(H_\rho).$ Finally, we say that a projection $p\in A^{**}$ is in the socle of $A^{**}$ if the cut-down $pA^{**}p$ is finite dimensional. Note that every finite rank projection in $B(H_\rho)$ actually lives in the socle of $A^{**}$.

In the rest of this section we recall the definition of certain classes of traces and use the same notations as of the ones used by Brown in \cite{Bro03}.

\begin{definition}\label{amt}
	A trace $\tau$ on a $C^*$-algebra $A$ is {amenable} if there exists a net of c.c.p. (contractive completely positive) maps $\varphi_n:A\to\mathbb{M}_{k(n)}$ such that $$\lim_{n\to\infty}{\|\varphi_n(ab)-\varphi_n(a)\varphi_n(b)\|_2}=0,\ \   \tau(a)=\lim_{n\to\infty}{tr_{k(n)}\circ\varphi_n(a)},$$ for $a,b\in A$.  An amenable  trace $\tau$ is called QD (quasidiagonal) if  moreover  $$\lim_{n\to\infty}{\|\varphi_n(ab)-\varphi_n(a)\varphi_n(b)\|}=0,$$for $a,b\in A,$ in the operator norm. Finally, an amenable trace $\tau$ on a \C-algebra $A$ is called LFD (locally finite dimensional) if there are c.c.p. maps  $\varphi_n:A\to\mathbb{M}_{k(n)}$ with $$d(a, A_{\varphi_n})\to 0,\ \ \ \tau(a)=\lim_{n\to\infty}{tr_{k(n)}\circ\varphi_n(a)},$$ for $a\in A$, where $A_{\varphi_n}$ is the multiplicative domain of $\varphi_n$.
\end{definition}	
When $A$ is unital,  in the above definition, one could always arrange the required maps to be unital, and when $A$ is separable, there is a sequence (instead of a net) witnessing the above approximations.
	The sets of amenable, QD, and LFD traces are denoted by $AT(A)$, $AT(A)_{QD}$, and $AT(A)_{LFD}$, respectively. 
	
Next, we recall the notion of QD \C-algebras, introduced and studied first by Javier Thayer \cite{Th77} and later by Norberto Salinas \cite{Sa77}, Dan Voiculescu \cite{Voi91}, \cite{Voi93}. It includes the natural subclass of inner-QD \C-algebras of Blackadar-Kirchberg \cite{BK01}.

\begin{definition} A separable \C-algebra $A$ is QD (quasidiagonal) if there exists a sequence of c.c.p. maps $\varphi_n:A\to\mathbb{M}_{k(n)}$ such that $$\lim_{n\to\infty}{\|\varphi_n(ab)-\varphi_n(a)\varphi_n(b)\|}=0,\ \ \lim_{n\to\infty}{\|\varphi_n(a)\|}=\|a\|,$$ for $a,b\in A,$ and it is called inner-QD (inner quasidiagonal) if for each $\epsilon>0$, each $n\geq 1$, and every $x_1,...,x_n\in A$, there is a projection $p$ in the socle of $A^{**}$ with $\|px_jp\|>\|x_j\|-\epsilon$ and $\|[p,x_j]\|<\epsilon$, for each $1\leq j\leq n$.
\end{definition}

First let us recall a characterization of inner quasidiagonality, due to Blackadar and Kirchberg (cf., \cite[Corollary 11.3.7]{BO}).

\begin{proposition}\label{obinnqd}
	 A separable $C^*$-algebra $A$ is inner-QD if and only if there is a sequence of c.c.p. maps $\varphi_n:A\to\mathbb{M}_{k(n)}$ such that $\|a\|=\lim\|\varphi_n(a)\|$ and $d(a, A_{\varphi_n})\to 0$, for  $a\in A$.
\end{proposition}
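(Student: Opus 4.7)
The plan is to prove the equivalence by matching finite-rank projections in the socle of $A^{**}$ with compressions into matrix algebras via Stinespring dilation. The bridge in both directions is the identity for the compression $\Psi(x) := pxp$ by a projection $p$ in a von Neumann algebra $M$: $\Psi(x^*x) - \Psi(x)^*\Psi(x) = px^*(1-p)xp$, and symmetrically on $xx^*$, whence the multiplicative domain of $\Psi$ is exactly $\{x \in M : [p,x] = 0\}$.

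For $(\Rightarrow)$, given inner-QD and a finite set $F \subset A$ with $\epsilon > 0$, extract $p$ in the socle of $A^{**}$ with $\|pap\| > \|a\| - \epsilon$ and $\|[p,a]\| < \epsilon$ for $a \in F$. Since $pA^{**}p \cong \mathbb{M}_k$, define $\varphi(a) := pap$, a c.c.p.\ map $A \to \mathbb{M}_k$. The identity gives $A_\varphi = \{a \in A : [p,a] = 0 \text{ in } A^{**}\}$, and the norm condition $\|\varphi(a)\| > \|a\| - \epsilon$ is immediate. For the distance condition, decompose $p = \sum_i q_i$ into rank-one projections $q_i = \xi_i \xi_i^*$ in minimal summands $B(H_{\rho_i})$ of $A^{**}$ coming from irreducible representations $\rho_i$ of $A$; the bound $\|[q_i, a]\| < \epsilon$ forces both $\rho_i(a)\xi_i$ and $\rho_i(a)^*\xi_i$ to lie within $\epsilon$ of $\mathbb{C}\xi_i$, and by Kadison transitivity we can find a small-norm correction $c \in A$ so that $b := a - c$ makes $\xi_i$ a common eigenvector of $\rho_i(b)$ and $\rho_i(b)^*$ for every $i$, giving $b \in A_\varphi$ with $\|a - b\| < C\epsilon$. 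Sweeping over an exhausting sequence of finite subsets with $\epsilon \to 0$ produces the required sequence of c.c.p.\ maps.

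For $(\Leftarrow)$, dilate each $\varphi_n$ via minimal Stinespring: $\varphi_n(a) = V_n^* \pi_n(a) V_n$ with $\pi_n: A \to B(H_n)$ a representation and $V_n$ an isometry; set $p_n := V_n V_n^*$. The identity applied inside $B(H_n)$ gives $A_{\varphi_n} = \{a \in A : [p_n, \pi_n(a)] = 0\}$, so $d(a, A_{\varphi_n}) \to 0$ implies $\|[p_n, \pi_n(a)]\| \to 0$, and $\|p_n \pi_n(a) p_n\| \to \|a\|$ is direct from the dilation formula. The remaining task is to transport $p_n$ into the socle of $A^{**}$: arrange $\pi_n$ at the outset as a finite direct sum of irreducible subrepresentations $\rho_j$ (obtainable by decomposing $tr_{k(n)} \circ \varphi_n$ through near-pure components), so that $\pi_n(A)'' = \bigoplus_j B(H_{\rho_j})$ is a finite direct sum of type-I factors sitting inside $A^{**}$ via the central cover identifications $c(\rho_j) A^{**} \cong B(H_{\rho_j})$; a quantitative perturbation of $p_n$ to a nearby projection $q_n \in \pi_n(A)''$, with error linear in $\|[p_n, \pi_n(a)]\|$, lifts to a projection $\tilde q_n \in A^{**}$ with $\tilde q_n A^{**} \tilde q_n$ finite-dimensional, hence in the socle.

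The principal obstacle is precisely this last perturbation step: the approximate commutation of $p_n$ with $\pi_n(A)$ places $p_n$ near the commutant $\pi_n(A)'$, not the bicommutant $\pi_n(A)''$ where socle projections of $A^{**}$ live. The reduction to a finite direct sum of irreducibles is essential: in a type-I factor $B(H)$, approximately commuting finite-rank projections lie near block-diagonal projections, which themselves live in $B(H)$, so functional calculus on block-compressions of $p_n$ yields the required perturbation with quantitative control; a secondary technical point is to verify that the Kadison-transitivity correction in the forward direction can be chosen uniformly over the finitely many rank-one pieces $q_i$, which is standard via the Glimm strengthening of transitivity.
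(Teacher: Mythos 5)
The paper does not actually prove this proposition: it recalls it as a theorem of Blackadar and Kirchberg, citing \cite[Corollary 11.3.7]{BO}. Measured against the real argument (which also underlies the paper's Lemma \ref{mult} and the proof of \emph{Theorem A}), your sketch has a genuine gap in each direction. In the forward direction, the step ``decompose $p=\sum_i q_i$ into rank-one projections \dots\ the bound $\|[q_i,a]\|<\epsilon$'' is false: a commutator bound on $p$ does not pass to subprojections. If $p$ has rank $2$ inside some $B(H_{\rho})$ and $\rho(a)$ interchanges the two lines of $pH_\rho$ (such $a$ exist by transitivity), then $[p,\rho(a)]=0$ while $\|[q_i,\rho(a)]\|=1$. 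Only the \emph{central} components $c(\rho_i)p$ inherit the bound, since $[c(\rho_i)p,a]=c(\rho_i)[p,a]$. Consequently your target---correcting $a$ so that each $\xi_i$ becomes a joint eigenvector of $\rho_i(b)$ and $\rho_i(b)^*$---rests on a false premise and is stronger than needed. What is actually required is the distance formula $d(a,A_p)=\|[a,p]\|$ of Lemma \ref{mult}; its proof corrects only the off-diagonal corners $p\rho(a)(1-p)+(1-p)\rho(a)p$, a finite-rank operator of norm exactly $\|[p,\rho(a)]\|$, via Glimm's strengthened transitivity, and makes no attempt to diagonalize $a$ on $pH$.

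In the backward direction the reduction to ``a finite direct sum of irreducible subrepresentations'' is unjustified: the minimal Stinespring dilation of a c.c.p.\ map $A\to\mathbb{M}_{k}$ can be a type $II_1$ factor representation (take $k=1$ and $\varphi$ a trace whose GNS representation is such), and ``decomposing $tr_{k(n)}\circ\varphi_n$ through near-pure components'' is not a construction. Even granting that reduction, $\|[p_n,\pi_n(a)]\|$ is small only for the finitely many $a$ under consideration, which gives no control on the commutators of $p_n$ with the central projections of $\pi_n(A)''$---these are merely weak limits of elements of $\pi_n(A)$---so the claimed perturbation of $p_n$ into $\pi_n(A)''$ cannot be extracted from the hypotheses; as you yourself note, approximate commutation only places $p_n$ near $\pi_n(A)'$. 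The standard route, and the one the paper itself runs in the proof of \emph{Theorem A}, uses the exact algebraic information your argument discards: $A_{\varphi_n}$ is a $C^*$-subalgebra on which $\varphi_n$ restricts to a $*$-homomorphism with finite-dimensional image; decompose that representation into irreducibles of $A_{\varphi_n}$, extend each to an irreducible representation of $A$ admitting a finite-dimensional invariant subspace \cite[Theorem 5.5.1]{Mu90}, and take the projections onto those subspaces as the socle projections. The norm and commutator estimates then transfer from elements $b\in A_{\varphi_n}$ to the nearby elements $a\in A$, with Lemma \ref{mult} again supplying the distance estimate.
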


For a free ultrafilter $\omega$ on the set of natural numbers, let  $\mathcal{Q}_\omega$ be the ultrapower of the universal UHF algebra $\mathcal{Q}$. Let $q_\omega:\ell^\infty(\mathcal{Q})\to\mathcal{Q}_\omega$ be the canonical quotient map and define the canonical trace $\tau_{\mathcal{Q}_\omega}$ on $\mathcal{Q}_\omega$ by $$\tau_{\mathcal{Q}_\omega}(q_\omega(a_1,a_2,...))=\lim_{n\to\omega}{\tau_{\mathcal{Q}}(a_n)}.$$
This is then the unique tracial state on $\mathcal{Q}_\omega$  (\cite[Theorem 8]{Oza13}). A c.c.p. map $\tilde{\Phi}:A\to\mathcal{Q}_\omega$ is said to be liftable if there is a c.c.p. map $\Phi:A\to\ell^\infty(\mathcal{Q})$ such that $q_\omega\circ\Phi=\tilde{\Phi}$.\\

Next we recall further characterizations of QD \C-algebras and traces in terms of liftability (see; \cite[Proposition 1.4]{TWW}, \cite[Theorem 4]{LL16}, and \cite[Proposition 3.4]{Gab}).

\begin{proposition} \label{lift}
	For a separable \C-algebra $A$,
	
	($i$) $A$ is quasidiagonal iff there is a liftable *-monomorphism $\Phi:A\to\mathcal{Q}_\omega$.\\
	
	($ii$) If $A$ is unital then $A$ is inner QD iff there is a sequence of u.c.p. maps $\varphi_n:A\to\mathbb{M}_{k(n)}$ with irreducible minimal Stinespring's dilation such that the induced map $\Phi:A\to\frac{\prod\mathbb{M}_{k(n)}}{\sum\mathbb{M}_{k(n)}}$ is a faithful $*$-homomorphism.\\
	
	$(iii)$ A trace $\tau$ on $A$ is QD iff there is a liftable *-homomorphism $\Phi:A\to\mathcal{Q}_\omega$ with $\tau=\tau_{\mathcal{Q}_\omega}\circ\Phi$.
\end{proposition}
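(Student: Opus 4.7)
The plan is to treat each of the three equivalences as an unpacking of the relevant approximation property into its ultrapower/sequence-algebra reformulation, using the universal UHF algebra $\mathcal{Q}$ as a receptacle that absorbs every matrix block $\mathbb{M}_{k(n)}$ via a unital embedding, and which conversely admits trace-preserving conditional expectations $E_m : \mathcal{Q} \to \mathcal{Q}_m$ onto finite matrix subalgebras with $E_m \to \mathrm{id}_{\mathcal{Q}}$ pointwise in norm.

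For (i), the forward direction begins with QD approximations $\varphi_n : A \to \mathbb{M}_{k(n)}$ from the definition of a QD \C-algebra; composing each with a unital embedding $\mathbb{M}_{k(n)} \hookrightarrow \mathcal{Q}$ produces c.c.p. maps $\Phi_n : A \to \mathcal{Q}$ that are asymptotically multiplicative and asymptotically isometric. Their image in $\mathcal{Q}_\omega$ is a $*$-monomorphism $\Phi$, and the sequence $(\Phi_n)$ itself furnishes the lift to $\ell^\infty(\mathcal{Q})$. For the converse, given a liftable $*$-monomorphism $\Phi$ with c.c.p. lift $\Phi_n : A \to \mathcal{Q}$, I would compose with $E_m$ and diagonalise: for each finite set $F \subset A$ and $\epsilon > 0$ one can choose $m = m(n,F,\epsilon)$ so that $E_m \circ \Phi_n$ remains $\epsilon$-multiplicative and $\epsilon$-isometric on $F$, and a reindexing along a cofinal sequence of $(F,\epsilon)$'s produces the required QD approximation by matrix algebras.

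For (ii), I would combine Proposition~\ref{obinnqd} with the Stinespring calculus. In one direction, given a projection $p$ in the socle of $A^{**}$ witnessing inner QD at tolerance $\epsilon$ on a finite set $F$, one exploits that the socle is spanned by finite-rank projections in the ranges $B(H_\pi)$ for irreducible $\pi$ (via the identification $c(\pi)A^{**} \cong B(H_\pi)$). After splitting $p$ and discarding cross-terms, it is harmless to assume $p$ lies in a single irreducible summand. The u.c.p. map $\varphi(a) = p\pi(a)p$ then has minimal Stinespring dilation equivalent to $\pi$, which is irreducible, and the identity $\varphi(a^*a) - \varphi(a)^*\varphi(a) = V^*\pi(a)^*(1 - VV^*)\pi(a)V$ converts the bound $\|[p,\pi(a)]\| < \epsilon$ into asymptotic multiplicativity; together with $\|p\pi(a)p\| > \|a\| - \epsilon$ this makes the induced $\Phi$ a faithful $*$-homomorphism. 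For the converse, irreducibility of the minimal dilation $\pi_n$ places $p_n := V_n V_n^*$ in the socle of $A^{**}$; the same Stinespring identity shows that asymptotic multiplicativity of $\varphi_n$ forces $\|[p_n,\pi_n(a)]\| \to 0$, and faithfulness of $\Phi$ gives $\|p_n\pi_n(a)p_n\| \to \|a\|$, recovering the defining condition of inner QD.

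For (iii), the argument parallels (i), but now with the trace tracked along the construction: $\Phi$ need not be injective, and the only extra ingredient is that for any unital embedding $\mathbb{M}_{k(n)} \hookrightarrow \mathcal{Q}$ the restriction of $\tau_{\mathcal{Q}}$ is $tr_{k(n)}$, so in the forward direction $\tau_{\mathcal{Q}_\omega} \circ \Phi = \lim_{n \to \omega} tr_{k(n)} \circ \varphi_n = \tau$, and in the converse the trace-preservation of each $E_m$ transports the identity $\tau = \tau_{\mathcal{Q}_\omega} \circ \Phi$ back to the QD approximants. The main obstacle, shared across the converse directions of (i) and (iii), is the diagonal selection $m = m(n)$ ensuring that asymptotic multiplicativity and norm/trace compatibility survive simultaneously after compressing through $E_m$; this is by now a standard reindexing argument, and all three parts are treated in the cited sources \cite{TWW,LL16,Gab}.
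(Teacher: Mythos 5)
First, a point of comparison: the paper does not prove Proposition~\ref{lift} at all --- it is explicitly \emph{recalled} from the literature, with citations to \cite[Proposition 1.4]{TWW}, \cite[Theorem 4]{LL16} and \cite[Proposition 3.4]{Gab}. So your proposal is being measured against those sources rather than against an argument in the text. For parts ($i$) and ($iii$) your route is the standard one and is essentially correct: unital embeddings $\mathbb{M}_{k(n)}\hookrightarrow\mathcal{Q}$ carry $tr_{k(n)}$ to $\tau_{\mathcal{Q}}$ by uniqueness of the trace on $\mathcal{Q}$, and the converse directions go through trace-preserving conditional expectations onto finite-dimensional subalgebras plus a diagonal reindexing, exactly as in \cite{TWW} and \cite{Gab}.

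Part ($ii$), however, contains a genuine gap in both directions. In the forward direction, the reduction ``after splitting $p$ and discarding cross-terms, it is harmless to assume $p$ lies in a single irreducible summand'' is false as stated: if $p=\sum_i p_i$ with the $p_i$ finite-rank projections in disjoint irreducible summands $B(H_i)$, then $\|pxp\|=\max_i\|p_i\pi_i(x)p_i\|$, so the summand achieving $\|p_i\pi_i(x)p_i\|>\|x\|-\epsilon$ depends on $x$, and no single summand need nearly norm the whole finite set $F$. The repair is to observe that faithfulness of $\Phi:A\to\prod\mathbb{M}_{k(n)}/\sum\mathbb{M}_{k(n)}$ only requires $\limsup_n\|\varphi_n(a)\|=\|a\|$, which can be arranged one element of a dense sequence at a time (for each $j$ and $n$, pick the summand of a socle projection that nearly norms $a_j$ while nearly commuting with $\{a_1,\dots,a_n\}$, and interleave); your sketch, as written, asserts more than the definition of inner quasidiagonality delivers. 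Symmetrically, in the converse direction faithfulness of $\Phi$ gives only $\limsup_n\|p_n\pi_n(a)p_n\|=\|a\|$ along an $a$-dependent subsequence, not the limit you claim; to recover the defining condition of inner quasidiagonality --- a \emph{single} socle projection simultaneously nearly norming every element of a finite set --- you must pass to finite block sums $\bigoplus_{j\in S}\varphi_j$, and for the commutator/distance identity of Lemma~\ref{mult} to apply to the summed projection you need the dilations $\pi_j$, $j\in S$, to be pairwise disjoint (or replaced by disjoint ones); if two of them are equivalent, the sum of the corresponding projections need not even be a projection in the socle of $A^{**}$. These are precisely the technical points that \cite{LL16} and \cite{BK01} are devoted to, so they cannot be waved away as bookkeeping.
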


We have the following {\it de facto} result which we omit its straightforward proof (using Propositions \ref{obinnqd} and \ref{lift}).

\begin{proposition} \label{inner}
	Let $A$ be a separable \C-algebra admitting a faithful LFD trace then $A$ is inner-QD. Moreover, any unital, inner-QD \C-algebra has at least one LFD trace.
\end{proposition}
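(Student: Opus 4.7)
My plan is to prove the two assertions separately, using Proposition~\ref{obinnqd} as the main technical bridge together with a standard quotient/faithfulness trick for the forward direction.

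For the first assertion, fix a faithful LFD trace $\tau$ on the separable algebra $A$, with witnessing c.c.p. maps $\varphi_n: A\to \mathbb{M}_{k(n)}$ satisfying $d(a, A_{\varphi_n})\to 0$ and $tr_{k(n)}\circ\varphi_n(a)\to\tau(a)$ for every $a$. The multiplicative-domain condition forces asymptotic multiplicativity in operator norm: choosing $a_n\in A_{\varphi_n}$ with $\|a-a_n\|\to 0$ and combining $\varphi_n(a_n b)=\varphi_n(a_n)\varphi_n(b)$ with contractivity of $\varphi_n$ yields $\|\varphi_n(ab)-\varphi_n(a)\varphi_n(b)\|\to 0$. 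Hence $\Phi(a):=[(\varphi_n(a))_n]$ is a well-defined $*$-homomorphism $A\to\prod_n \mathbb{M}_{k(n)}/\sum_n \mathbb{M}_{k(n)}$, and any ultrafilter trace $\tau_\omega$ on the quotient satisfies $\tau_\omega\circ\Phi=\tau$. Since $\tau$ is faithful, $\Phi$ is injective and hence isometric, so $\limsup_n\|\varphi_n(a)\|=\|a\|$ for every $a\in A$. To upgrade this $\limsup$ to an honest limit I would exploit separability: the same argument applied to any subsequence of $(\varphi_n)$ shows $\limsup_j\|\varphi_{n_j}(a)\|=\|a\|$, so a standard diagonal extraction along a countable dense $\{a_k\}\subset A$ produces a single subsequence $(n_j)$ with $\lim_j\|\varphi_{n_j}(a_k)\|=\|a_k\|$ for every $k$; density then extends this to all $a\in A$, while $d(a,A_{\varphi_{n_j}})\to 0$ persists. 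Proposition~\ref{obinnqd} now gives that $A$ is inner-QD.

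For the moreover part, let $A$ be unital and inner-QD, and produce a net of u.c.p. maps $\varphi_\lambda: A\to\mathbb{M}_{k(\lambda)}$ with $\|\varphi_\lambda(a)\|\to\|a\|$ and $d(a,A_{\varphi_\lambda})\to 0$, either from Proposition~\ref{obinnqd} or directly via cut-downs $a\mapsto pap$ by socle projections of $A^{**}$ as in the definition of inner quasidiagonality. Define states $\tau_\lambda:=tr_{k(\lambda)}\circ\varphi_\lambda$ and let $\tau$ be a weak-$*$ cluster point in the compact state space of $A$. Along a converging subnet, asymptotic multiplicativity (again from the multiplicative-domain hypothesis) combined with cyclicity of $tr_{k(\lambda)}$ forces $\tau(ab)=\tau(ba)$, so $\tau$ is tracial; amenability is automatic because $\|\cdot\|_2\leq\|\cdot\|$ on $\mathbb{M}_{k(\lambda)}$, and the two LFD conditions are inherited from the subnet. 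Hence $\tau\in AT_{LFD}(A)$.

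The main obstacle is the passage from $\limsup_n\|\varphi_n(a)\|=\|a\|$ to a genuine limit valid for every $a$ simultaneously in the first part; this is where separability is essential and where one must be careful to record that every subsequence still induces a faithful quotient $*$-homomorphism, so that the diagonal extraction is legitimate. Everything else is routine bookkeeping with multiplicative domains, asymptotic multiplicativity and weak-$*$ compactness.
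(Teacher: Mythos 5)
Your first half is correct and is essentially an implementation of the hint the paper gives (the paper omits the proof, citing Propositions \ref{obinnqd} and \ref{lift}): the multiplicative-domain condition does give norm-asymptotic multiplicativity, the induced map $\Phi:A\to\prod_n\mathbb{M}_{k(n)}/\sum_n\mathbb{M}_{k(n)}$ is a $*$-homomorphism, faithfulness of $\tau$ (which factors through $\Phi$ via the limit trace) makes $\Phi$ injective, hence isometric, so $\limsup_n\|\varphi_n(a)\|=\|a\|$, and Proposition \ref{obinnqd} applies. One small simplification: since the identical argument applies to every subsequence of $(\varphi_n)$, the fact that the $\limsup$ along every subsequence equals $\|a\|$ already forces $\lim_n\|\varphi_n(a)\|=\|a\|$ for each fixed $a$; the diagonal extraction is unnecessary.

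The \emph{moreover} part has a genuine (though repairable) gap. Proposition \ref{obinnqd} supplies only c.c.p.\ maps, and your cluster-point argument really needs unitality: if $\varphi_n$ witnesses Proposition \ref{obinnqd}, so does $\varphi_n\oplus 0_{m_n}$ into $\mathbb{M}_{k(n)+m_n}$ for any $m_n$, and choosing $m_n$ huge makes $tr_{k(n)+m_n}\circ(\varphi_n\oplus 0_{m_n})\to 0$ pointwise, so the weak-$*$ cluster point can be the zero functional rather than a state. Thus you must take the second of your two options, the cut-downs $a\mapsto pap$ by socle projections, which are unital; but then the target is the finite-dimensional algebra $pA^{**}p$, not a matrix algebra, and ``$tr_{k(\lambda)}$'' is not yet defined. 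The missing step is to write $p=p_1+\cdots+p_m$ with $p_i$ finite-rank projections living in pairwise disjoint irreducible representations $\pi_i$ (the Blackadar--Kirchberg description of socle projections, exactly as used in the paper's proof of Theorem A) and to compress into $B(\bigoplus_i p_iH_i)\cong\mathbb{M}_{\mathrm{rank}(p)}$; this is precisely the u.c.p.\ map of Lemma \ref{ATMS}$(i)$, its composition with the normalized trace is the ATS functional, and $d(a,A_\varphi)\le\|[p,a]\|$ by Lemma \ref{mult}. With that substitution your remaining steps (traciality of the cluster point from asymptotic multiplicativity plus cyclicity of $tr$, amenability from $\|\cdot\|_2\le\|\cdot\|$, and extraction of a sequence by separability) go through and yield an LFD trace in the sense of Definition \ref{amt}.
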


By a well-known result of Voiculescu, the cone of every \C-algebra is quasidiagonal (more is true, as quasidiagonality is homotopy invariant). An analogous result in the context of QD traces is known as "Gabe's order-zero quasidiagonality" (cf., \cite[Proposition 3.2]{BCW}).

\begin{proposition} \label{gabe}
	Let $A$ be a separable \C-algebra and $\tau\in AT(A)$ then there is a c.c.p. order-zero map $\Phi:A\to\mathcal{Q}_\omega$ which is liftable and $\tau(a)=\tau_{\mathcal{Q}_\omega}(\Phi(a)\Phi(1)^{n-1})$ for $n=1,2,\cdots$ and $a\in A$. In particular, every amenable trace on the cone of $A$ is quasidiagoanl.
\end{proposition}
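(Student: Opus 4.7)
The plan is to implement Gabe's ultraproduct technique. Starting from the defining c.c.p.\ maps $\varphi_n:A\to\mathbb{M}_{k(n)}$ of the amenable trace $\tau$, which satisfy $\|\varphi_n(ab)-\varphi_n(a)\varphi_n(b)\|_2\to 0$ and $tr_{k(n)}\circ\varphi_n(a)\to\tau(a)$, I would first embed $\mathbb{M}_{k(n)}$ unitally into $\mathcal{Q}$ to obtain a liftable c.c.p.\ map $\Phi_0:A\to\mathcal{Q}_\omega$ with $\tau=\tau_{\mathcal{Q}_\omega}\circ\Phi_0$. Composing with the canonical surjection $\mathcal{Q}_\omega\twoheadrightarrow\mathcal{R}^\omega$ onto the tracial ultrapower of the hyperfinite II$_1$ factor $\mathcal{R}$ gives a $*$-homomorphism $\bar\Phi$ (Connes's reformulation of amenability), but back in $\mathcal{Q}_\omega$ the map $\Phi_0$ is only multiplicative modulo the trace kernel and in particular need not be order-zero.

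The heart of the proof is modifying the Stinespring data underlying each $\varphi_n=V_n^*\pi_n(\cdot)V_n$: one replaces the rank-$k(n)$ projection $V_nV_n^*$ by a positive contraction $h_n$ that asymptotically commutes with $\pi_n(A)$ in $\|\cdot\|_2$ and whose moments $Tr(h_n^m\pi_n(a))$ all converge to $\tau(a)$ for every integer $m\geq 1$. The existence of such $h_n$ would be obtained via Dixmier-style averaging inside $\mathcal{R}^\omega$: the image of $V_nV_n^*$ can be averaged by unitaries from $\pi_n(A)$ until the average lies in the relative commutant of $\pi_n(A)$, and this average is pulled back to a finite-dimensional approximant via the Choi--Effros lifting theorem. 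The resulting c.c.p.\ maps $\psi_n(a)=h_n\pi_n(a)$ are order-zero and assemble into an order-zero, liftable c.c.p.\ map $\Phi:A\to\mathcal{Q}_\omega$. Invoking the Kirchberg--Winter structure theorem for c.p.c.\ order-zero maps one writes $\Phi(a)=h\pi(a)$ for a $*$-homomorphism $\pi$ commuting with $h=\Phi(1)$, so $\Phi(a)\Phi(1)^{n-1}=h^n\pi(a)$, and the moment condition yields the trace formula $\tau(a)=\tau_{\mathcal{Q}_\omega}(\Phi(a)\Phi(1)^{n-1})$ for every $n\geq 1$.

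For the ``in particular'' assertion, one applies the first part to an amenable trace $\sigma$ on the cone $CA=C_0((0,1])\otimes A$ to obtain a liftable order-zero $\Phi_{CA}:CA\to\mathcal{Q}_\omega$ realizing $\sigma$. The Kirchberg--Winter correspondence between c.p.c.\ order-zero maps out of $B$ and $*$-homomorphisms out of $CB$ extends $\Phi_{CA}$ to a $*$-homomorphism $\tilde\Phi:C(CA)\to\mathcal{Q}_\omega$, and pulling back along a suitable $*$-embedding $CA\hookrightarrow C(CA)$ (such as $f\otimes a\mapsto f(t_1t_2)\otimes a$, which is a genuine $*$-embedding since $t_1t_2$ vanishes at the boundary of $(0,1]^2$) yields a liftable $*$-homomorphism on $CA$ whose trace composition recovers $\sigma$. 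Hence $\sigma$ is QD by Proposition \ref{lift}(iii).

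The principal obstacle throughout is the Dixmier averaging step: simultaneously guaranteeing (i) the approximate commutation of $h_n$ with $\pi_n(A)$ in $\|\cdot\|_2$, (ii) the correct $m$-th trace moments for all $m\geq 1$, and (iii) the Choi--Effros lifting property, compatibly with passing to the ultraproduct. This simultaneous control is the technical core that lets the order-zero construction go strictly beyond Connes's classical passage from amenability to an embedding $A\hookrightarrow\mathcal{R}^\omega$.
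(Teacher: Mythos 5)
Note first that the paper does not actually prove this proposition: it is recalled from the literature with the citation \cite[Proposition 3.2]{BCW} (Gabe's order-zero quasidiagonality), so there is no in-paper argument to compare with, and your proposal has to stand on its own. It does not, and the gap is at its core. You arrange for $h_n$ to commute with $\pi_n(A)$ only asymptotically in $\|\cdot\|_2$ and then assert that $\psi_n(a)=h_n\pi_n(a)$ is order zero and that the assembled map $\Phi:A\to\mathcal{Q}_\omega$ is order zero. Neither holds: $a\mapsto h_n\pi_n(a)$ is not even self-adjoint-valued (let alone c.p.\ or order zero) unless $h_n$ commutes \emph{exactly} with $\pi_n(A)$, and approximate commutation in the trace norm only yields orthogonality preservation modulo the trace-kernel ideal, i.e.\ an order-zero (indeed multiplicative) map into the tracial ultrapower $\mathcal{R}^\omega$ --- which is precisely the data amenability already provides --- not into the norm ultraproduct $\mathcal{Q}_\omega$. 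The whole content of the proposition is the upgrade from $\|\cdot\|_2$-control to operator-norm order-zero structure, and your plan never supplies it; you acknowledge this yourself by listing the simultaneous control of (i)--(iii) as an unresolved ``principal obstacle''. The Dixmier-averaging device is also problematic on its own terms: the averaging of $V_nV_n^*$ over unitaries of $\pi_n(A)$ takes place in $B(H_n)$ (which carries no finite trace and may be infinite dimensional), a convex combination of conjugates of a projection is far from a projection, so the higher moments $m\geq 2$ are not preserved (already for $a=1$ one needs $\Phi(1)$ to be a projection modulo the trace kernel), and it is not explained how Choi--Effros would return a finite-rank $h_n$ enjoying all the required properties simultaneously.

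For contrast, the cited proof runs through the trace-kernel extension $0\to J\to \mathcal{Q}_\omega\to\mathcal{R}^\omega\to 0$: amenability of $\tau$ gives a $*$-homomorphism $A\to\mathcal{R}^\omega$ realizing $\tau$ together with a c.p.c.\ lift built from the maps $\varphi_n$, and since $J$ is a $\sigma$-ideal (Kirchberg's $\epsilon$-test, quasicentral approximate units that commute exactly in the ultrapower) this lift can be corrected to a liftable c.p.c.\ order-zero map $\Phi$; the moment identities are then automatic because $\Phi(a)\Phi(1)^{n-1}$ agrees with the homomorphism modulo $J$ and $\tau_{\mathcal{Q}_\omega}$ vanishes on $J$. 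Your closing step --- applying the first part to an amenable trace on the cone, passing to the supporting homomorphism via the order-zero structure theorem, and pulling back along $f\otimes a\mapsto f(t_1t_2)\otimes a$ --- is essentially the standard deduction and is fine in outline (though you should still verify that the resulting homomorphism on the cone is liftable before invoking Proposition \ref{lift}$(iii)$), but it rests on the first part, which as proposed does not go through.
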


\section{Proof of the main results}\label{lfd}

In this section we show  that the set of LFD traces is convex. For this, we need a characterization of LFD traces in terms of irreducible representations (to control the multiplicative domain of the underlying c.c.p. maps). First we recall a result of Blackadar-Kirchberg \cite{BK01}, which gives a way to calculate the distance to the multiplicative domain (cf., \cite[Proposition 11.3.6]{BO}).

\begin{lemma} \label{mult}	For a separable \C-algebra $A$,
	 let $p\in A^{**}$ be in the socle and $A_p$ be the multiplicative domain of the map $a\mapsto pap$. Then $d(a,A_p)=\|[a,p]\|$, for each $a\in A$.
\end{lemma}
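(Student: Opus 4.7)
The plan is to establish the two inequalities $\|[a,p]\| \le d(a, A_p)$ and $d(a, A_p) \le \|[a,p]\|$ separately.

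For the first inequality, I would start from an arbitrary $b \in A_p$; since $b$ commutes with $p$, one has $[a, p] = [a - b, p]$. Writing $a - b$ in the $2 \times 2$ block form relative to $p$ and $1 - p$, the commutator $[a - b, p]$ reduces to its two off-diagonal blocks $\pm\,p(a - b)(1 - p)$ and $\pm\,(1 - p)(a - b)p$, each of norm at most $\|a - b\|$; a direct block calculation gives $\|[a - b, p]\| = \max\{\|p(a-b)(1-p)\|,\|(1-p)(a-b)p\|\}$. Hence $\|[a, p]\| \le \|a - b\|$, and passing to the infimum over $b \in A_p$ yields the bound.

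For the reverse inequality $d(a, A_p) \le \|[a, p]\|$, the ideal candidate is $\hat b := p a p + (1-p) a (1-p) \in A^{**}$, the block-diagonal part of $a$ relative to $p$. By construction $\hat b$ commutes with $p$, and the same block calculation yields $\|a - \hat b\| = \|[a, p]\|$. The obstacle is that $\hat b$ generically lies in $A^{**}$ rather than in $A$; the socle hypothesis on $p$ is precisely what enables a norm approximation of $\hat b$ by a genuine element of $A$. I would exploit the hypothesis by decomposing $p = \sum_{j=1}^k p_j$ as an orthogonal sum of finite rank projections, where each $p_j$ is supported in the central summand $c(\pi_j)A^{**} \cong B(H_{\pi_j})$ for pairwise inequivalent irreducible representations $\pi_1, \dots, \pi_k$ of $A$ (obtained by sorting the minimal sub-projections of $p$ by their equivalence class in $A^{**}$). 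Setting $x_j := p_j \pi_j(a)(1-p_j) + (1-p_j)\pi_j(a) p_j$, one has $\|x_j\| = \|[\pi_j(a), p_j]\|$ and $\max_j \|x_j\| = \|[a, p]\|$. Glimm's extension of Kadison's transitivity theorem, applied to the finite family $\pi_1, \dots, \pi_k$, then produces, for any $\epsilon > 0$, an element $c \in A$ with $\|c\| < \|[a, p]\| + \epsilon$ such that $\pi_j(c)\xi = x_j \xi$ and $\pi_j(c^*)\xi = x_j^*\xi$ for every $\xi \in \mathrm{range}(p_j)$ and every $j$. These prescriptions force $[\pi_j(a - c), p_j] = 0$ in $B(H_{\pi_j})$ for each $j$, and since the $p_j$ live in the orthogonal central summands $c(\pi_j)A^{**}$, we conclude $[a - c, p] = 0$ in $A^{**}$. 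Taking $b := a - c$ gives $b \in A_p$ with $\|a - b\| < \|[a, p]\| + \epsilon$, and letting $\epsilon \to 0$ yields the desired bound.

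The main obstacle is the simultaneous Kadison--Glimm transitivity step with tight norm control: for general, non-self-adjoint $a$, producing a single $c \in A$ that realizes the prescribed action and adjoint action on the finite-dimensional subspaces $\mathrm{range}(p_j)$ under each $\pi_j$, with $\|c\|$ close to $\max_j \|x_j\|$, requires the quantitative form of the theorem. One reduces to the self-adjoint case by writing $a = \mathrm{Re}(a) + i\,\mathrm{Im}(a)$ (noting that $x_j$ is self-adjoint when $a$ is) and invoking the self-adjoint version on each piece. A cruder application of transitivity would only deliver $d(a, A_p) \le 2\|[a, p]\|$, losing the sharp equality stated in the lemma.
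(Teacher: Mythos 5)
The paper itself gives no proof of this lemma: it is quoted from Blackadar--Kirchberg (via Brown--Ozawa, Proposition 11.3.6). Measured against that standard argument, your outline reconstructs the right skeleton: the inequality $\|[a,p]\|\le d(a,A_p)$ by the off-diagonal block computation, and the converse by writing the socle projection as $p=\sum_j p_j$ with $p_j$ a finite-rank projection under the central cover $c(\pi_j)$ of pairwise disjoint irreducible representations, then using norm-controlled transitivity to subtract the off-diagonal parts $x_j$. The verifications that the two prescriptions $\pi_j(c)p_j=x_jp_j$ and $p_j\pi_j(c)=p_jx_j$ force $[\pi_j(a-c),p_j]=0$, and that $\|[a,p]\|=\max_j\|[\pi_j(a),p_j]\|=\max_j\|x_j\|$ because the commutators sit in orthogonal central summands, are correct.

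The genuine gap is in the norm control of the transitivity step, which is exactly where the sharp constant lives. You need one element $c\in A$ satisfying both prescriptions for all $j$ with $\|c\|\le\max_j\|x_j\|+\epsilon$, but your proposed justification --- split $a=\mathrm{Re}(a)+i\,\mathrm{Im}(a)$ and apply the self-adjoint version to each piece --- is precisely the ``cruder application'' you warn against: the two corrections $c_1,c_2$ are unrelated self-adjoint elements, so the best generic estimate is $\|c_1+ic_2\|\le\|[\mathrm{Re}(a),p]\|+\|[\mathrm{Im}(a),p]\|+2\epsilon\le 2\|[a,p]\|+2\epsilon$, which only yields $d(a,A_p)\le 2\|[a,p]\|$, not the stated equality. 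As written, the hard inequality is therefore not proved. The repair is to dilate rather than split: apply the norm-controlled self-adjoint Kadison--Glimm transitivity in $M_2(A)$ to the self-adjoint targets $\left(\begin{smallmatrix}0 & x_j^{*}\\ x_j & 0\end{smallmatrix}\right)$, the pairwise disjoint irreducible representations $\pi_j\otimes\mathrm{id}_2$ and the finite-rank projections $p_j\oplus p_j$; the resulting self-adjoint $\tilde c\in M_2(A)$ has $\|\tilde c\|\le\max_j\|x_j\|+\epsilon$, and its lower-left corner $c\in A$ satisfies $\pi_j(c)p_j=x_jp_j$, $p_j\pi_j(c)=p_jx_j$ and $\|c\|\le\|\tilde c\|$. (Equivalently, one can run the iterative Kaplansky-density proof of transitivity in the strong-$*$ topology directly for the non-self-adjoint target $\bigoplus_j x_j$.) With that substitution your argument closes and coincides with the cited Blackadar--Kirchberg proof in substance.
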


Next, let us prove the first main result of this paper.  

\vspace{.3cm}
{\it Proof of Theorem A.}
	 Let $\tau$ be a LFD trace, and for $\epsilon>0$, and contractions $a_1,...,a_n\in A$, choose positive integer $N\geq 1$ and a c.c.p. map $\varphi:A\to\mathbb{M}_N$ with $$d(a_i,A_\varphi)<\epsilon/4,\ \  \big|\tau(a_i)-\frac{Tr(\varphi(a_i))}{N}\big|<\epsilon,$$ for $1\leq i\leq n$. Choose contractions $b_1,...,b_n\in A_\varphi$ such that $\|a_i-b_i\|<\epsilon/2$. Since the restriction $\varphi|_{A_\varphi}$ of $\varphi$ is a finite dimensional representation, it decomposes into a direct sum of finitely many irreducible representations of finite dimension say, $\varphi|_{A_\varphi}=\sigma_1\oplus...\oplus\sigma_r$,  by \cite[Theorem 5.5.1.]{Mu90} there are irreducible representations $\tilde{\sigma}_i:A\to B(\tilde{H_i})$ and closed vector subspaces $H'_i\subset\tilde{H_i}$ which are invariant for $\sigma_i(A_\varphi)$ such that the restriction of $\tilde{\sigma}_i$ to $A_\varphi$ and Hilbert space $H'_i$ is unitarily equivalent to $\sigma_i$. After  obvious identifications, let $p_i$ be the corresponding orthogonal projection of $\tilde{H}_i$ onto $H_i$ which has a finite rank as $dim H_i$ is finite. Since $A$ is simple, each $\tilde{\sigma}_i$ is non-GCR, hence by \cite[Theorem A.2]{BK01} there are uncountably many disjoint irreducible representations say $\pi_i$ with the same kernel as $\tilde{\sigma}_i$ (hence approximate unitarily equivalent by Voiculescu's Theorem \cite[Theorem II.5.8.]{Dav96}). Let $u_i$ be a unitary operator implementing $\|\tilde{\sigma}_i(b_j)-u_i^*\pi_i(b_j)u_i\|<\epsilon/3$ for $j=1,2,...,n$. Put $q_i:=u_ip_iu_i^*$, then,
	 $$\Big|\tau(b_j)-\frac{\sum_{i=1}^{r}{Tr(q_i\pi_i(b_j))}}{\sum_{i=1}^{r}{Tr(q_i)}}\Big|<3\epsilon,$$
	 $$\max_{i}{\|[q_i, \pi_i(b_j)]\|}<\epsilon,$$
	 for $j=1,2,...,n.$
	
	Conversely, let such representations $\pi_j$ and projections $p_j$ exist and put $$N:=\sum_{j=1}^{n}{Tr(p_j)},\ \  \varphi(x):=\bigoplus_{j=1}^n{p_j\pi_j(x)p_j}, \ \ p:=p_1+...+p_n.$$ Let us identify each $B(H_j)=\pi_j(A)''$ with a unique summand of $A^{**}$, then each $p_j$ lives in the socle of $A^{**}$. Since the representations $\pi_j$ are disjoint, these summands are different, hence the projections $p_j$ are orthogonal and sum to a projection $p$ in the socle of $A^{**}$. Now $A_\varphi$ consists of those elements  commuting with projections $p_j$ and hence with $p$, therefore, $A_\varphi\subset A_p$. By Lemma \ref{mult}, $$d(a,A_p)=\|[a,p]\|=\max_{1\leq j\leq n}\|[p_j,\pi_j(a)]\|,$$ and we are done.

\vspace{.3cm}

Blackadar and Kirchberg introduced the notion of pure matricial states in \cite{BKirredinnQD}, here we need an analogous notion.
\vspace{.3cm}
\begin{definition}
	Let $A$ be a \C-algebra, $\mathfrak{F}\subset A$ be a finite set and $\epsilon>0$. We call $f\in A^*$ an $(\mathfrak{F},\epsilon)$-ATS (approximate tracial state) if there are finitely many disjoint irreducible representations $\sigma_i:A\to B(H_i)$, $i=1,2,...,n$ and  finite rank projections $p_{i}\in B(H_i)$, such that $f(a)=\frac{\sum_{i}{Tr(p_{i}\sigma_i(a))}}{\sum_{i}{Tr(p_{i})}}$, and $\max_{i}{\|[p_{i}, \sigma_i(x)]\|}<\epsilon$, for every $a\in A$ and $x\in\mathfrak{F}$.
\end{definition}
 \vspace{.3cm}
The following lemma is the key technical tool to prove convexity of $AT_{LFD}$.

\vspace{.3cm}
\begin{lemma}\label{ATMS}
$(i)$ For disjoint irreducible representations $\sigma_i:A\to B(H_i)$, $i=1,2,...,n$, and  finite rank projections $p_{i}\in B(H_i)$ as above, the map $$\varphi: A\to B(\bigoplus_{i} p_{i}H_i);  \ \ \varphi(a):=\oplus_{i=1}^{n}{{p_{i}\sigma_i(a)}p_{i}}\ \ (a\in A),$$ is  c.c.p.  with $d(x,A_\varphi)<\epsilon$, for all $x\in\mathfrak{F}.$

$(ii)$ If $A$ is simple and $f$ and $g$ are  $(\mathfrak{F}, \epsilon)$-ATS, for $\delta>0$ there is a $(\mathfrak{F},\epsilon+\delta)$-ATS, $h$, with $|h(x)-\frac{1}{2}(f+g)(x)|<\delta$, for all $x\in\mathfrak{F}$. 
\end{lemma}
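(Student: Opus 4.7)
For part $(i)$, my plan is to observe that $\varphi$ is the compression of the direct-sum representation $\sigma := \bigoplus_i \sigma_i$ by the projection $p := \bigoplus_i p_i$, hence automatically c.c.p. To control $d(x, A_\varphi)$, I would pass to $A^{**}$: since the $\sigma_i$ are pairwise disjoint, their central covers $c(\sigma_i)$ are mutually orthogonal, so the identifications $c(\sigma_i)A^{**} \cong \sigma_i(A)''$ let me view each $p_i$ as a finite-rank projection living in a distinct central summand. The sum $p$ therefore lies in the socle of $A^{**}$, and for $x \in \mathfrak{F}$ the commutator collapses onto summands, $\|[p,x]\| = \max_i \|[p_i, \sigma_i(x)]\| < \epsilon$. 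Lemma \ref{mult} gives $d(x, A_p) < \epsilon$, and since any element commuting with $p$ necessarily commutes with each individual $p_i$ (they sit in different summands) and hence lies in the multiplicative domain of $\varphi$, I have $A_p \subseteq A_\varphi$ and $d(x, A_\varphi) < \epsilon$.

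For part $(ii)$, I would write the ATS data as $f(a) = \sum_i Tr(p_i \sigma_i(a))/M_f$ and $g(a) = \sum_j Tr(q_j \tau_j(a))/M_g$, where $M_f := \sum_i Tr(p_i)$ and $M_g := \sum_j Tr(q_j)$ are positive integers (ranks of finite-rank projections). The averaged functional $\frac12(f+g)$ would be realized exactly if the two contributions carried equal total weight, so the strategy is to take $M_g$ cloned copies of the $f$-data and $M_f$ cloned copies of the $g$-data. To preserve irreducibility and disjointness, I would invoke simplicity of $A$: each $\sigma_i$ (resp.\ $\tau_j$) is non-GCR, so by \cite[Theorem A.2]{BK01} it admits uncountably many mutually disjoint irreducible representations with the same kernel, all approximately unitarily equivalent to the original by Voiculescu's theorem.

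Fix a tolerance $\delta' > 0$ to be tuned at the end. I would select pairwise disjoint irreducibles $\{\sigma_i^{(k)}\}_{k=1}^{M_g}$ and $\{\tau_j^{(\ell)}\}_{\ell=1}^{M_f}$ with unitaries $u_i^{(k)}, v_j^{(\ell)}$ witnessing $\|\sigma_i^{(k)}(x) - u_i^{(k)} \sigma_i(x) (u_i^{(k)})^*\| < \delta'$ and $\|\tau_j^{(\ell)}(x) - v_j^{(\ell)} \tau_j(x) (v_j^{(\ell)})^*\| < \delta'$ for $x \in \mathfrak{F}$. Setting $p_i^{(k)} := u_i^{(k)} p_i (u_i^{(k)})^*$ and $q_j^{(\ell)} := v_j^{(\ell)} q_j (v_j^{(\ell)})^*$, the candidate functional is
\[
 h(a) := \frac{\sum_{i,k} Tr(p_i^{(k)} \sigma_i^{(k)}(a)) + \sum_{j,\ell} Tr(q_j^{(\ell)} \tau_j^{(\ell)}(a))}{2 M_f M_g}.
\]
Unitary invariance of $Tr$ together with the finite-rank bound yields $|Tr(p_i^{(k)} \sigma_i^{(k)}(a)) - Tr(p_i \sigma_i(a))| \leq \mathrm{rank}(p_i)\,\delta'$ for $a \in \mathfrak{F}$, and summing gives $|h(x) - \tfrac12(f+g)(x)| \leq \delta'$; the commutator estimates degrade by at most $2\delta'$, so $h$ is an $(\mathfrak{F}, \epsilon + 2\delta')$-ATS. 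Taking $\delta' < \delta/2$ completes the construction.

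The main obstacle will be the error bookkeeping in $(ii)$: I must simultaneously control the drift in traces (so $h$ stays within $\delta$ of the target average) and the inflation of commutator norms (so they stay within $\epsilon + \delta$), while only adjusting the single Voiculescu tolerance $\delta'$. The conceptual maneuver — reducing the averaging problem to equal-weight summation by cloning each representation $M_g$ or $M_f$ times into disjoint approximately unitarily equivalent copies — rests entirely on the abundance of disjoint irreducibles guaranteed by simplicity, and seems robust so long as the ranks of the $p_i$ and $q_j$ remain uniformly bounded, which they are.
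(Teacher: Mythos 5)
Your proposal is correct and follows essentially the same route as the paper: part $(i)$ via the socle projection $p=\sum_i p_i$ (orthogonal central summands from disjointness) and Lemma \ref{mult}, and part $(ii)$ by cloning the $f$-data $M_g$ times and the $g$-data $M_f$ times into mutually disjoint, approximately unitarily equivalent irreducibles (simplicity plus Glimm/Voiculescu), conjugating the projections, and running the same trace and commutator bookkeeping with a tolerance of order $\delta/2$. Your explicit observation that $A_p\subseteq A_\varphi$ (the direction actually needed) is a nice touch, and the closing worry about uniform rank bounds is unnecessary since the errors are normalized by $2M_fM_g$.
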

\begin{proof}
The first assertion of part $(i)$ directly follows from the definition, and the second  follows from Lemma \ref{mult}. For part $(ii)$, let $f(x)=\frac{\sum_{i}{Tr(p_{i}\sigma_i(x))}}{\sum_{i}{Tr(p_{i})}}$ and $g(x)=\frac{\sum_{r}{Tr(p'_{r}\sigma'_r(x))}}{\sum_{r}{Tr(p'_{r})}}$, with $\max_{i}{\|[p_{i}, \sigma_i(x)]\|}<\epsilon, \ \ \max_{r}{\|[p'_{r}, \sigma'_r(x)]\|}<\epsilon,$ for $x\in\mathfrak F$ and $i=1,2,...,n$, $r=1,2,...,n'$. Set $d:=\sum_{i}{Tr(p_{i})}$ and $d':=\sum_{r}{Tr(p'_{r})}$. Then,
$$\frac{1}{2}(f(x)+g(x))=\frac{d'\sum_{i}{Tr(p_{i}\sigma_i(x))}+d\sum_{r}{Tr(p'_{r}\sigma'_r(x))}}{2dd'},$$
for  $x\in\mathfrak{F}.$ Since $A$ is simple, there are mutually disjoint irreducible representations $\{\pi_{i,j}\}$, $i=1,...,n$, $j=1,...,d'$, and $\{\pi'_{r,s}\}$, $r=1,...,n'$, $s=1,...,d$ such that $\pi_{i,j}$ are approximate unitarily equivalent to $\sigma_i$ and the same happens for $\pi'_{r,s}$ and $\sigma'_r$, that is, there are families  $\{u_{i,j}\}$ and $\{v_{r,s}\}$ of unitary operators such that $$\|\sigma_i(x)-u_{i,j}^*\pi_{i,j}(x)u_{i,j}\|<\delta/2, \ \ \|\sigma'_{r,s}(x)-v_{r,s}^*\pi'_{r,s}(x)v_{r,s}\|<\delta/2,\ \ (x\in\mathfrak{F}).$$ Then, for $q_{i,j}:=u_{i,j}p_iu_{i,j}^*$ and $q'_{r,s}:=v_{r,s}p'_rv_{r,s}^*$, 
$$|Tr(p_i\sigma_i(x))-Tr(q_{i,j}\pi_{i,j}(x))|<\delta Tr(q_{i,j})=\delta Tr(p_i),$$
$$|Tr(p'_r\sigma'_r(x))-Tr(q'_{r,s}\pi'_{r,s}(x))|<\delta Tr(p'_r),$$
and
$$\|[q_{i,j},\pi_{i,j}(x)]\|\leq\|[p_i,\sigma_i(x)]\|+2\|\sigma_i(x)-u_{i,j}^*\pi_{i,j}(x)u_{i,j}\|<\epsilon+\delta,$$
for  $x\in\mathfrak{F}.$ Similarly,
$$\|[q'_{r,s},\pi'_{r,s}(x)]\|<\epsilon+\delta, \ \ (x\in\mathfrak{F}).$$
To finish the proof let us put, 

\begin{align*}
h(a)&:=\frac{\sum_{i,j}{Tr(q_{i,j}\pi_{i,j}(a))}+\sum_{r,s}{Tr(q'_{r,s}\pi'_{r,s}(a))}}{\sum_{i,j}{Tr(q_{i,j})}+\sum_{r,s}{Tr(q'_{r,s})}}\\&=\frac{\sum_{i,j}{Tr(q_{i,j}\pi_{i,j}(a))}+\sum_{r,s}{Tr(q'_{r,s}\pi'_{r,s}(a))}}{2dd'},
\end{align*}
for $a\in A$. Then,
\begin{align*}
|h(x)-\frac{1}{2}(f(x)+g(x))|&\leq
\frac{1}{2dd'}\Big(\sum_{i,j}{\Big|Tr(q_{i,j}\pi_{i,j}(x))-Tr(p_i\sigma_i(x))\Big|}\\&+\sum_{r,s}{\Big|Tr(q'_{r,s}\pi'_{r,s}(x))-Tr(p'_r\sigma'_r(x))\Big|}\Big)\\&\leq\frac{1}{2dd'}(\sum_{i,j}{\delta Tr(p_i)}+\sum_{r,s}{\delta Tr(p'_r)})<\delta,
\end{align*}
for all $x\in\mathfrak{F},$ as required.
\end{proof}

The convexity  now follows.

\vspace{.3cm}
{\it Proof of Corollary B.}\label{cnvx} Given LFD traces $\tau$, $\tau'$, $\epsilon>0,$ and a finite set $\mathfrak{F}$ of contractions,  $\tau$ and $\tau'$  are  $(\mathfrak{F},\epsilon/3)$-ATS by {\it Theorem A}. Hence, $\frac{1}{2}(\tau+\tau')$ is  $(\mathfrak{F},\epsilon)$-ATS, by Lemma \ref{ATMS}$(ii)$ for $\delta=\epsilon/3$, and so is LFD by {\it Theorem A}.  Now the set of LFD traces is closed under taking dyadic convex combinations, and so is convex by weak$^*$-closedness.    

\vspace{.3cm}
Next we prove {\it Proposition C}. This follows from Gabe's order-zero quasidiagonality for separable subalgebras $A$ of $\mathcal{R}$ and equivalence of inner quasidiagonality for $A$ and its cone.
  
\vspace{.3cm}
{\it Proof of Proposition C.}
	$(i)\Rightarrow (ii)$. Given an inclusion $B\subset A$ and $\tau\in UAT(A)\cap AT_{LFD}(A)$, since $\tau|_B$ is quasidiagonal and  uniformly amenable, $\tau|_B\in AT_{LFD}(B)$. Let $C\subset\mathcal{R}$ be a separable simple subalgebra, then by Gabe's order-zero quasidiagonality, $C_0(0,1]\otimes C$ has a faithful, uniformly amenable, quasidiagonal trace. Indeed, $C$ has a faithful, uniformly amenable tracial state coming from the normalized tracial state of $\mathcal{R}$, and integrating this against the Lebesque measure on $(0,1]$ provides a faithful, uniformly amenable tracial state on cone of $C$. This trace is then LFD by $(i)$,  which implies that $C_0(0,1]\otimes C$ is inner quasidiagonal by {\it Corollary B}. Since inner quasidiagonality of $C$ and $C_0(0,1]\otimes C$ are equivalent \cite[Corollary 3.11]{BK01}, $C$ is quasidiagonal. Now since all separable subalgebras of $\mathcal R$ are quasidiagonal and every separable subalgebra embeds into a simple subalgebra (by simplicity of $\mathcal{R}$), it follows that $\mathcal R$ is quasidiagonal. 
	
	$(ii)\Rightarrow (i)$. Given a separable \C-algebra $C$ and $\tau\in UAT(C)\cap AT_{QD}(C)$, by a result of Brown \cite[Proposition 3.2.2.]{Bro03}, $\pi_\tau(C)''$ is hyperfinite. Since every separable, finite and hyperfinite von Neumann algebra embeds into $\mathcal{R}$ (see the proof of \cite[Theorem A.1]{MuRo2020}), we may assume without loss of generality that $\pi_\tau(C)''\subset\mathcal{R}$. Now let $\pi_\tau(C)\subset D\subset\mathcal{R}$ be a separable, simple, monotracial \C-algebra, then by quasidiagoanlity of $\mathcal{R}$, $D$ is strongly quasidiagonal with a unique LFD trace \cite[6.1.14]{Bro03}. In particular, $\tau\in AT_{LFD}(C)$.

\vspace{.3cm}
The next corollary illustrates that simplicity is rather technical obstruction. Indeed, the lamplighter group $\Gamma:=\mathbb{Z}_2\wr\mathbb{Z}$ is an amenable, ICC group such that $C^*(\Gamma)$ is RFD hence far from being simple but every trace on $C^*(\Gamma)$ is LFD.

\vspace{.3cm}
{\it Proof of Proposition D.}
 First note that the reduced \C-algebra $C^*(\Gamma)$ has a faithful irreducible representation $\pi$ such that $\pi(C^*(\Gamma))\cap\mathbb{K}(H_\pi)=0$. Indeed, $C^*_r(\Gamma)''=\mathcal{R}$ since $\Gamma$ is ICC, and by amenability of $\Gamma$, $C^*(\Gamma)=C^*_r(\Gamma)$, hence $C^*(\Gamma)$ admits a faithful, $II_1$-factor representation. Therefore, by a result of Glimm \cite[Theorem A.2]{BK01}, there are uncountably many non-GCR irreducible representations which are faithful (note that by separability, prime ideals and primitive ideals are the same \cite[II.6.5.15]{BlacC*vonNeu}). Invoking the main result of Tikusis-White-Winter \cite[Corollary 6.1]{TWW}, we conclude that every trace on $C^*(\Gamma)$ is QD. Combining this with a result of Nate Brown \cite[Proposition 3.3.2]{Bro03}, we obtain an increasing sequence $(p_n)$ of finite rank projections in $\mathbb{B}(H_\pi)$ such that for each $a\in C^*(\Gamma)$, $\|[p_n,\pi(a)]\|\to 0$, and  for every trace $\tau\in T(C^*(\Gamma))$, there is a subsequence $p_{n(k)}$ with $\tau(a)=\lim_{k\to\infty}{\frac{Tr(p_{n(k)}\pi(a))}{Tr(p_{n(k)})}}$. Therefore, $\tau$ satisfies the conditions of (the proof of) {\it Theorem A}, and so it is LFD. In particular, the (faithful) canonical trace of $C^*(\Gamma)$ is LFD. Hence $C^*(\Gamma)$ is inner-QD (by Proposition \ref{inner}) and nuclear, which is equivalent to being strong-NF.
 
\vspace{.3cm}		
{\it Remark.}$(i)$ The main result of \cite{DykTor2014} and similar argument as above reveals that for  separable, unital, residually finite dimensional \C-algebras $A_1$ and $A_2$ with $(dim (A_1)-1)(dim (A_2)-1)\geq2$, the full free product, $A=A_1*A_2$ satisfies $AT_{QD}(A)=AT_{LFD}(A)$ and every LFD trace has the form described in {\it Theorem A}.\\
$(ii)$ The simplicity assumption on $A$ is used in the proof of {\it Theorem A} to make sure
that $A$ has no GCR irreducible representation. In particular, this result is also valid in the
non simple case, as long as this extra condition holds. The same holds  for {\it Corollary B} and {\it Proposition C}. A practical instance is a non simple
$\mathcal{Z}$-stable \C-algebra, where $\mathcal{Z}$ is the Jiang-Su algebra (as easily seen by the Kirchberg slice
lemma \cite[Lemma 4.1.9]{RS02}).

\section{Acknowledgment}
We would like to thank Professor Wilhelm Winter for pointing out a gap in an earlier version of this paper. We also thank the anonymous referee for careful reading and suggestions to improve the paper. 
	
\end{document}